\newcommand{\dom}{\mathop{\rm dom}\nolimits}
\newcommand{\gph}{\mathop{\rm gph}}
\newcommand{\plqf}{\mathop{\rm plqf}\nolimits}
\newcommand{\R}{\ensuremath{\mathbb R}}
\newcommand{\Rb}{\ensuremath{\R\cup \{+\infty\}}}
\newcommand{\ubar}[1]{\text{\b{$#1$}}}
\newcommand{\xt}{\widetilde{x}}
\newcommand{\yt}{\widetilde{y}}
\newcommand{\st}{\widetilde{s}}
\newcommand{\xb}{\bar{x}}
\newcommand{\yb}{\bar{y}}
\newcommand{\nan}{\ensuremath{\text{NaN}}}
\newcommand{\qfct}[2]{a_{#1} #2^2 + b_{#1} #2 + c_{#1}}
\newtheorem{theorem}{Theorem}[section]
\newtheorem{example}[theorem]{Example}
\newtheorem{lemma}[theorem]{Lemma}
\newtheorem{proposition}[theorem]{Proposition}
\newtheorem{remark}[theorem]{Remark}
\newtheorem{fact}[theorem]{Fact}
\begin{document}
	
	\title{Computation of the Epsilon-Subdifferential of Convex Piecewise-Defined Functions in Optimal Worst-Case Time\thanks{This is a pre-print of an article published in Set-Valued and Variational Analysis (SVAA). The final authenticated version is available online at: \url{http://dx.doi.org/10.1007/s11228-018-0476-5}}}
	\author{Deepak Kumar, Yves Lucet\thanks{Corresponding author; yves.lucet@ubc.ca}}
	\date{\today}
	\maketitle
	
	\begin{abstract}
	The $\epsilon$-subdifferential of convex univariate piecewise linear-quadratic functions can be computed in linear worst-case time complexity as the level-set of a convex function. Using dichotomic search, we show how the computation can be performed in logarithmic worst-case time. Furthermore, a new algorithm to compute the entire graph of the $\epsilon$-subdifferential in linear time is presented. Both algorithms are not limited to convex PLQ functions but are also applicable to any convex piecewise-defined function with little restrictions.
	
\smallskip
\noindent \textbf{Keywords.} Subdifferential; $\epsilon$-Subdifferentials; Piecewise linear-quadratic functions; Convex Function; Computational Convex Analysis (CCA); Computer-Aided Convex Analysis; Visualization.
	\end{abstract}
	
	\section{Introduction}
	The $\epsilon$-subdifferential quantifies the approximation error intrinsic to numerical computation when one is interested in the solution of a convex nonsmooth optimization problem. It extends Fermat's rule and approximates the convex subdifferential through the Br\o{}ndsted-Rockafellar Theorem~\cite{BRONDSTED-65}. It plays a critical role in the convergence of Bundle methods~\cite{HIRIART-URRUTY-13,HIRIART-URRUTY-93}. 
	
	While the numerical evaluation of an $\epsilon$-subgradient occurs in several numerical optimization algorithms, the full computation of the graph of the subdifferential is too time consuming except for specific classes of functions. Such endeavors are the goal of computational convex analysis, a field that started with the numerical computation of the Legendre-Fenchel transform~\cite{GARDINER-11a,LUCET-96a,LUCET-97b,LUCET-05c,HIRIART-URRUTY-06,LUCET-06,LUCET-12} and has since tackled the computation of the main transforms encountered in convex analysis, e.g. the Moreau envelope, the Lasry-Lions double envelope, the proximal average~\cite{TRIENIS-07, BAUSCHKE-07a,HARE-07,GOEBEL-10,BAUSCHKE-11,JOHNSTONE-11,GOEBEL-12}, etc. See~\cite{LUCET-10} for historical notes and a survey of numerous applications. While pure symbolic computation was considered~\cite{BAUSCHKE-06,BORWEIN-06a,BORWEIN-08}, most work in computational convex analysis use a hybrid symbolic-numerical framework that considers a specific class of functions e.g. piecewise linear or piecewise linear-quadratic functions~\cite{ROCKAFELLAR-09}. Most algorithms have been made publicly available in the CCA open source numerical library~\cite{LUCET-96}. 
	
	More recent work has considered computing operators i.e. multifunctions such as the $\epsilon$-sub\-differential. Bajaj et al.~\cite{BAJAJ-16} proposed an algorithm to compute the $\epsilon$-subdifferen\-tial of a univariate convex PLQ function as the level set of its conjugate minus an affine function, and used the CCA numerical library to carry out that computation in linear time. 
	
	We propose a new algorithm to perform the same computation in logarithmic time by using binary and dichotomic search to avoid computing the entire graph of the conjugate. The approach relies on Goebel's graph-matrix calculus as made explicit in \cite{GARDINER-11a} especially the parametrization of the graph of the conjugate, which has been previously exploited in computational convex analysis in \cite{HIRIART-URRUTY-06}.
	
We then propose another new algorithm to compute the entire graph of the $\epsilon$-subdifferen\-tial in linear time. We use symmetry to simplify the implementation and introduce a new data structure to store $\inf \partial_\epsilon f$, which is a piecewise non-PLQ function for which we give explicit formulas.
	
	Both algorithms are implemented in Scilab within the CCA numerical library~\cite{LUCET-96}. Finally, we point out that both algorithms can be easily adapted to convex piecewise univariate functions that are not necessarily PLQ. 
	
	The remaining of the paper is organized as follow. Section~\ref{s:prel} recalls needed facts and sets the notations, Section~\ref{s:algPt} details the pointwise computation of the $\epsilon$-subdifferential while Section~\ref{s:algGph} explains how to compute its entire graph. We briefly explain in Section~\ref{s:extend} how to extend the algorithms to non-PLQ functions. Finally, Section~\ref{s:conc} concludes the paper and proposes future research directions.
	
\section{Preliminaries and Notations}\label{s:prel}

Throughout this paper, we restrict ourselves to univariate functions and adapt more general definitions and results to that context. Unless otherwise stated, functions are lower semi-continuous (lsc).

The $\epsilon$-subdifferential of a function $f:\R \rightarrow \Rb$ is defined for $x \in \text{dom}(f)= \{x \in \R : f(x) < +\infty \}$ and $\epsilon \geq 0$ as
\[
	\partial_{\epsilon}{f(x)} = \{s \in \R : f(y) \geq f(x) + \langle s,y - x\rangle - \epsilon, \forall y \in \R \}.
\]
and is defined as empty when $x \notin \text{dom}(f)$. Any element $s \in \partial_\epsilon f(x)$ is called an $\epsilon$-subgradient. The function $f$ is said to be proper when it has nonempty domain.

To make the distinction with the approximate subdifferential introduced in~\cite{IOFFE-84}, following \cite{BAJAJ-16} we use the accepted terminology $\epsilon$-subdifferen\-tial, although historically \cite{BRONDSTED-65} used the term ``approximate subgradients''.

When $\epsilon=0$, $\partial_0 f(x)$ reduces to the convex subdifferential that we will denote $\partial f(x)$. In our computation, we will use the fact that $\partial f(x) \subset \partial_\epsilon f(x)$ for any $\epsilon \geq 0$.

The conjugate of $f$ 
\[f^*(s) = \sup_x \{ s x - f(x) \} \]
will play a critical role in the computation due to the following fact.

\begin{fact}[{\cite[Proposition 3.1]{BAJAJ-16}}]
For any function $f:\R \to \Rb$, 
	\[\partial_\epsilon f(\bar{x}) = \{s \in \R : f^*(s) \leq l_{\bar{x}}(s)\}\]
where $l_{\bar{x}}: s \mapsto \epsilon-f(\bar{x}) + \langle s,\bar{x} \rangle$.
\end{fact}
	
While \cite{BAJAJ-16} uses the CCA numerical library~\cite{LUCET-96} to compute $f^*$ explicitly, we will avoid such computation by relying on the natural parametrization of $f^*$ previously exploited in ~\cite{GARDINER-11a,HIRIART-URRUTY-06}.

\begin{fact}\label{f:equiv}
For a lsc convex function $f:\R\to\Rb$, the following are equivalent
\begin{enumerate}[(i)]
	\item $(x,s,y) 	\text{ satisfies } y=f(x), 		s\in \partial f(x)$,
	\item $(s,x,y^*)\text{ satisfies } y^*=f^*(s),x\in \partial f^*(s), y^*=sx-y$.
\end{enumerate}
\end{fact}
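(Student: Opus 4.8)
The plan is to derive this equivalence from two classical pillars of convex analysis: the Fenchel--Young equality characterizing the subdifferential, and the Fenchel--Moreau biconjugation theorem. The central identity I would invoke is that for any $x,s$ one has the Fenchel--Young inequality $f(x)+f^*(s)\geq\langle s,x\rangle$, with equality holding \emph{if and only if} $s\in\partial f(x)$. Because $f$ is assumed lsc and convex (and proper, so that the statement is nonvacuous), the Fenchel--Moreau theorem gives $f^{**}=f$, from which one obtains the symmetric inversion rule $s\in\partial f(x)\iff x\in\partial f^*(s)$. These two facts, taken together, essentially encode the entire content of Fact~\ref{f:equiv}; the remaining work is careful bookkeeping of the auxiliary variables $y$ and $y^*$.

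First I would prove the implication (i)$\Rightarrow$(ii). Assuming $y=f(x)$ and $s\in\partial f(x)$, the Fenchel--Young equality yields $f(x)+f^*(s)=\langle s,x\rangle$, so that $f^*(s)=\langle s,x\rangle-f(x)=\langle s,x\rangle-y$. Setting $y^*:=f^*(s)$ then gives simultaneously $y^*=f^*(s)$ and $y^*=\langle s,x\rangle-y$, which are two of the three conditions in (ii). The third, $x\in\partial f^*(s)$, follows immediately from the inversion rule applied to $s\in\partial f(x)$.

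For the converse (ii)$\Rightarrow$(i), I would run the argument backwards. From $x\in\partial f^*(s)$ the inversion rule (again using $f^{**}=f$) delivers $s\in\partial f(x)$, which is the second condition of (i). The Fenchel--Young equality in the form $f(x)+f^*(s)=\langle s,x\rangle$ then gives $f(x)=\langle s,x\rangle-f^*(s)=\langle s,x\rangle-y^*$, and combining this with the hypothesis $y^*=\langle s,x\rangle-y$ yields $f(x)=\langle s,x\rangle-(\langle s,x\rangle-y)=y$, completing (i).

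The step most deserving of care is the inversion rule $s\in\partial f(x)\iff x\in\partial f^*(s)$, since it is exactly here that the hypotheses on $f$ are indispensable: without lower semicontinuity and convexity one would only have $f^{**}\leq f$ and the reverse implication could fail. Beyond ensuring propriety so that biconjugation is available, the argument is otherwise a direct substitution, and I would expect no genuine analytic obstacle once the Fenchel--Young and Fenchel--Moreau ingredients are in place.
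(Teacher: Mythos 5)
Your proof is correct. The paper itself states Fact~\ref{f:equiv} without proof --- it is the standard parametrization of $\gph f^*$ from Goebel's graph-matrix calculus, quoted from the cited references --- and your derivation is exactly the canonical argument behind it: the Fenchel--Young characterization $s\in\partial f(x)\iff f(x)+f^*(s)=\langle s,x\rangle$ handles the identity $y^*=sx-y$, while the inversion rule $s\in\partial f(x)\iff x\in\partial f^*(s)$, which as you rightly note is the one step requiring $f^{**}=f$ and hence the lsc-convex (and proper) hypotheses, supplies the remaining condition in each direction. Your bookkeeping of $y$ and $y^*$ in both implications is complete, so there is nothing to correct.
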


A function $f:\R \to \Rb$ is \emph{piecewise linear-quadratic} (PLQ) if $\dom(f)$ can be represented as the union of finitely many closed intervals on each of which $f$ is linear or quadratic. Note that a PLQ function is continuous on its domain and can be represented in the form
   	\begin{equation}\label{eq:PLQ}
   	f(x) = \begin{cases}
		a_{0}x^{2} + b_{0}x + c_{0}, &\text{ if }  -\infty < x < x_{0} \\
		a_{1}x^{2} + b_{1}x + c_{1}, &\text{ if }  x_{0} \leq x \leq x_{1}\\
		\vdots & \vdots \\
		a_n x^{2} + b_n x + c_n, &\text{ if } \hspace{5pt}  x_{n-1} < x < +\infty,
		\end{cases}
   	\end{equation}
where $a_i, b_i, c_j \in \R$ for $i= \{0,1,\cdots,n\}$, $j= \{1,\cdots,n-1\}$, and $c_0, c_n \in \Rb$. (The above formula represents more general functions than PLQ functions since some coefficient choices may result in a function that is discontinuous on its domain.) 

Any PLQ function will be stored as a $4\times (n+1)$ matrix~\cite{LUCET-06}
\begin{equation}
   P = \begin{bmatrix}
   x_{0} & a_{0} & b_{0} & c_{0} \\
   x_{1} & a_{1} & b_{1} & c_{1} \\
   \vdots& \vdots & \vdots & \vdots\\
   x_{n-1} & a_{n-1} & b_{n-1} & c_{n-1}\\
   +\infty & a_{n} & b_{n} & c_{n}
   \end{bmatrix},
\label{eq:P}
\end{equation}
with the convention that if $c_{0} = +\infty$ or $c_{n} = +\infty$, then the structure demands that $a_{0} = b_{0} =0$ or $a_{n} = b_{n} =0$ respectively. A quadratic function on $\R$ is stored as $n=0$ and $x_{0} = +\infty$ while an indicator function of a single point $\tilde{x} \in \R$
\[
   f(x) = \iota_{\{\tilde{x}\}}(x) + c = 
		\begin{cases}
		c,  & \text{ if } x = \tilde{x}\\
   +\infty, & \text{ if } x \neq \tilde{x}
		\end{cases}
\]
   where $ c \in \mathbb{R}$, is stored as a single row vector 
	$ P= \begin{bmatrix}
   \tilde{x} & 0 & 0 & c
   \end{bmatrix}$.
We will call the later function, a needle function.
	
\section{Computing \texorpdfstring{$\partial_\epsilon f(\bar{x})$}{epssub}}\label{s:algPt}
		 In this section, we present Algorithm~\ref{algPt} to compute the $\epsilon$-subdifferential of an univariate proper convex PLQ function at a particular $\bar{x} \in \dom(f)$. The key idea is to use dichotomic search to reduce the time complexity. 
		
		For a PLQ function $f:\R \rightarrow \Rb$ defined as \eqref{eq:PLQ} we note $s_i=2 a_i x_i + b_i$ and $\partial_\epsilon f(\bar{x}) = [\ubar{s},\bar{s}]$ where $\ubar{s}=\inf \{s : f^*(s) \leq l(s) \}$ and $\bar{s}=\sup \{s : f^*(s) \leq l_{\bar{x}}(s) \}$. We also note $\dom(f) = [\ubar{d}, \bar{d}]$ i.e. $\ubar{d}=\inf \{x: f(x)<+\infty\}$.

		Since $f^*$ is proper convex PLQ the equation $f^*(s)=l(s)$ has at most two solutions. We focus on the computation of $\ubar{s}$ (the computation of $\bar{s}$ is similar). 
		
		If $\bar{x}=\ubar{d}$, $f^*(s)<l_{\bar{x}}(s)$ for all $s < \inf \partial f(\bar{x})$. Hence, $\ubar{s}=-\infty$.
		
		So let us assume there is a single solution to $f^*(s)=l_{\bar{x}}(s)$ on the interval $(-\infty, \inf \partial f(\bar{x})]$.		
		The first step of Algorithm~\ref{algPt} is to locate the interval $[s_l,s_{l+1}]$ containing $\ubar{s}$. When $\ubar{s}\leq s_1$, we can compute $\ubar{s}$ directly so we now assume  $s_1 \leq \ubar{s}$. Using binary search we locate the index $i$ such that $x_{i-1} < \bar{x} \leq x_i$. We know that $\ubar{s} \leq s_i$ since $s_i \in \partial f(\bar{x}) \subset \partial_\epsilon f(\bar{x})$ i.e. we have $s_1 \leq \ubar{s} \leq s_i$, see Figure~\ref{f:conj_l}.
		
		\begin{figure}%
			\centering
			\def\svgwidth{0.6\textwidth}
			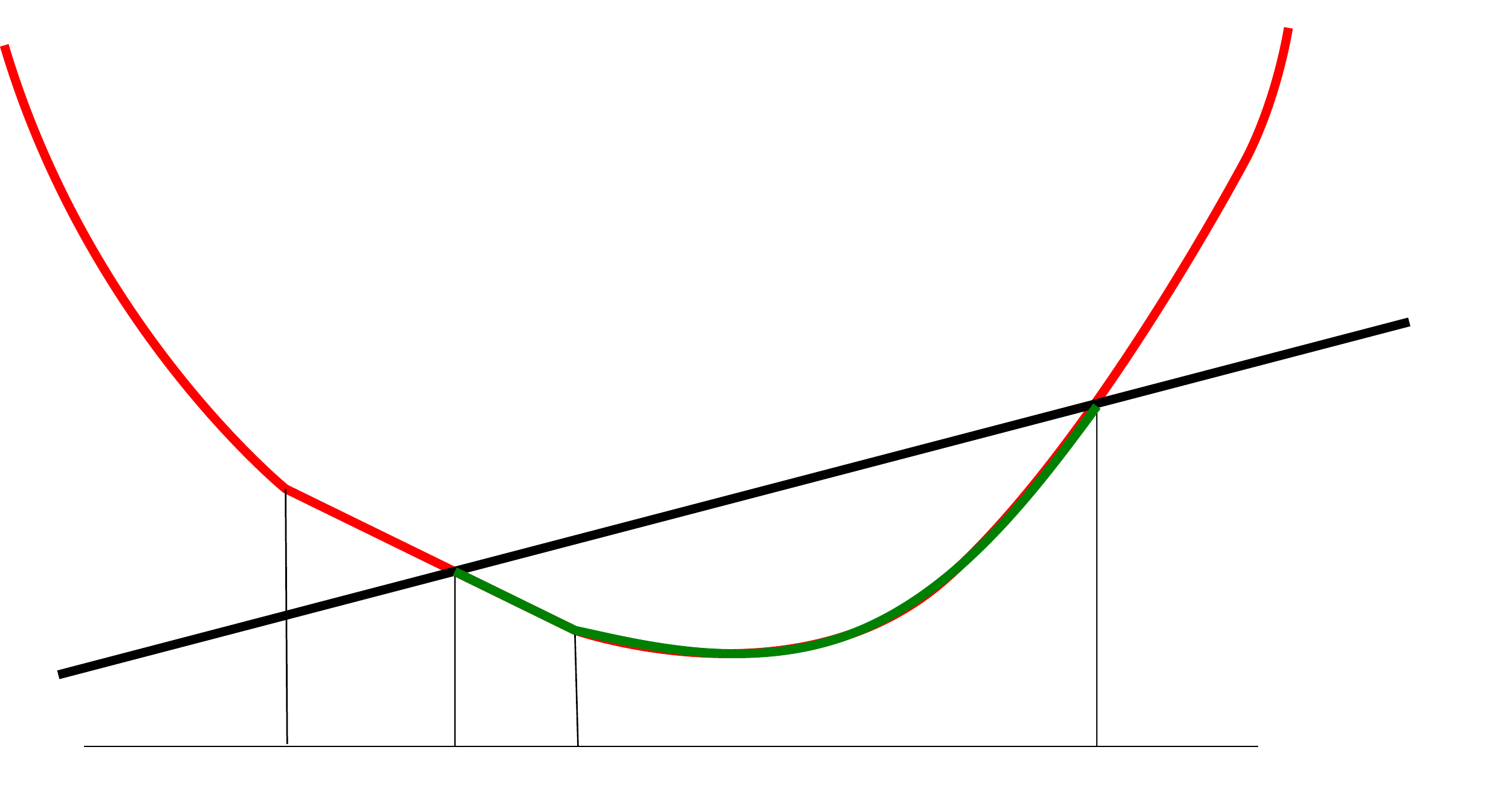		
		\caption{Conjugate $f^*$ (in red) with the affine function $l_{\bar{x}}$ (in black). The algorithm computes the index $l$ such that $s_l \leq \ubar{s} \leq s_{l+1}$. The green area corresponds to $\partial_\epsilon f(\bar{x})=[\ubar{s},\bar{s}]$.}%
		\label{f:conj_l}%
		\end{figure}
		
		We then perform a dichotomic search between $l=1$ and $u=i$ by computing the middle index $m=\lfloor (l+u)/2 \rfloor$ and updating $l$ or $u$ depending on whether $f^*(s_m)<l(s_m)$. Using Fact~\ref{f:equiv}, we have $y_m=f(x_m), s_m=2 a_m x_m + b_m\in \partial f(x_m)$ so $f^*(s_m)=y^*_m = s_m x_m - y_m$ i.e. we can perform the dichotomic search without computing $f^*$ explicitly, see Algorithm~\ref{algPt}.
	
\begin{algorithm}
\caption{Computing the $\epsilon$-subdifferential in logarithmic time}
\label{algPt}
\begin{algorithmic}[1]
	\Require $\plqf$ in PLQ matrix format, $\bar{x} \in \dom(f)$, $\epsilon >0$
	\Statex $x_i=\plqf(i,1)$
	\Function{plq\_epssub}{$\plqf, \bar{x}, \epsilon$}
		\Statex Dichotomic search on left part
		\State Find $i$ such that $x_{i-1} < \bar{x} \leq x_i$ using binary search
		\State $l=1$; $u=i$
		\While{$u-l > 1$}
			\State $m=\lfloor (l+u)/2 \rfloor$; $a = \plqf(m,2)$; $b=\plqf(m,3)$; $ c=\plqf(m,4)$
			\State $s=2 a x_m + b$; $y=a x_m^2 + b x_m + c$; $y^*=s x_m - y$
			\If{$y^* > l(s)$} 
				\State 	$l = m$ 
			\Else 	
				\State $u = m$
			\EndIf
		\EndWhile
		\Statex The lower bound to $\partial_\epsilon f(\bar{x})$ is in $[s_l,s_{l+1}]$
		\State $\ubar{s}$ = Intersection(l,u); \Comment{return $\inf \{s : f^*(s) < l(s) \}$}
		\Statex Perform similar dichotomic search on right part to obtain $\bar{s}$
	 \State \textbf{return} $\ubar{s},\bar{s}$
\EndFunction
\end{algorithmic}
\end{algorithm}
		 
Invoking the function \emph{Intersection(l,u)} computes $\ubar{s}=\inf \{s : f^*(s) < l(s) \}$ by considering several cases. The dichotomic search ensures that $\ubar{s}\in [s_l,s_{l+1}]$, and that the equation $f^*(s)=l(s)$ has at most a solution in that interval. If $f^*(s)=l(s)$ has a solution in $[s_l,s_{l+1}]$, we compute explicitly $f^*$ on that interval by interpolation ($f^*$ is at most quadratic and goes through $s_j,s_j x_j - y_j$ with derivative $2 a_j s_j + b_j$ for $j=l, l+1$) and solve the resulting linear or quadratic equation. Otherwise we have $f^*(\ubar{s}) < l(\ubar{s})$, and in that case, $\ubar{s} = s_l$.
	
	The algorithm computes $\bar{s}$ similarly.
		
		\begin{proposition}
			Given an univariate convex PLQ function $f$ represented as \eqref{eq:P}, $\bar{x} \in \dom(f)$, and $\epsilon >0$, Algorithm~\ref{algPt} returns $\partial f(\bar{x})=[\ubar{s},\bar{s}]$ with $\ubar{s},\bar{s}\in \R\cup \{-\infty, +\infty\}$ in $O(\log n)$ time.
		\end{proposition}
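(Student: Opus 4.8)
The proposition asserts two things --- correctness, that the returned pair $(\ubar{s},\bar{s})$ are the endpoints of $\partial f(\bar{x})$, and a worst-case running time of $O(\log n)$ --- and the plan is to handle them in turn. For correctness I would first reduce the task to locating the two endpoints of a level set. By the level-set characterization recalled in Section~\ref{s:prel}, the set the algorithm must return is $\{s\in\R: f^*(s)\le l_{\bar{x}}(s)\}$; since $f$ is proper convex PLQ, its conjugate $f^*$ is proper convex PLQ and $l_{\bar{x}}$ is affine, so $g:=f^*-l_{\bar{x}}$ is convex and its sublevel set $\{g\le 0\}$ is a single (possibly unbounded) closed interval. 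Hence $\partial f(\bar{x})=\{g\le 0\}=[\ubar{s},\bar{s}]$ with $\ubar{s}=\inf\{g\le 0\}$ and $\bar{s}=\sup\{g\le 0\}$, and it suffices to prove that Algorithm~\ref{algPt} computes these two numbers exactly. I would carry out the argument for the left endpoint $\ubar{s}$; the right endpoint $\bar{s}$ follows from the mirror-image search.

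For $\ubar{s}$ I would first clear the degenerate cases stated in the text: if $\bar{x}=\ubar{d}$ then $g(s)<0$ for all $s$ below $\inf\partial f(\bar{x})$ and $\ubar{s}=-\infty$, while if $\ubar{s}\le s_1$ the endpoint is produced directly. In the generic case the key structural lemma is that on $(-\infty,\inf\partial f(\bar{x})]$ the function $g$ is strictly decreasing with a unique zero $\ubar{s}$; this follows from convexity of $f^*$ (so $g$ is minimized exactly on $\partial f(\bar{x})$, with value $-\epsilon<0$ there since $\epsilon>0$) together with the standing assumption that $f^*(s)=l_{\bar{x}}(s)$ has a single solution on that half-line. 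Consequently, on this half-line the sign test ``$g(s)>0$'' holds precisely for $s<\ubar{s}$. I would then verify the loop invariant $s_l\le\ubar{s}\le s_u$: it holds initially because $\ubar{s}\le\inf\partial f(\bar{x})\le s_i=s_u$ and $s_1=s_l\le\ubar{s}$, and it is preserved since at each midpoint $m$ the comparison of $f^*(s_m)$ with $l(s_m)$ is exactly the sign test for $g(s_m)$, and the bound that changes keeps $\ubar{s}$ bracketed. Crucially, the test consumes only the single value $f^*(s_m)$, which Fact~\ref{f:equiv} supplies as $f^*(s_m)=s_m x_m-y_m$ with $y_m=f(x_m)$, so no global conjugate is ever assembled. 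The loop exits with $u-l=1$, localizing $\ubar{s}\in[s_l,s_{l+1}]$.

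It then remains to show \emph{Intersection} recovers $\ubar{s}$ exactly from the bracket $[s_l,s_{l+1}]$. Here I would invoke the parametrization of $\gph f^*$ from Fact~\ref{f:equiv}: on $[s_l,s_{l+1}]$ the conjugate coincides with the single linear-or-quadratic piece through the points $(s_j,\,s_j x_j-y_j)$ with slope $x_j$ for $j=l,l+1$, so this piece is reconstructed by interpolation in constant time. Solving the resulting linear or quadratic equation $f^*(s)=l_{\bar{x}}(s)$ on $[s_l,s_{l+1}]$ returns the unique crossing $\ubar{s}$ when it exists, and when it does not one has $f^*(s_l)<l_{\bar{x}}(s_l)$, forcing $\ubar{s}=s_l$. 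This yields the exact left endpoint, and the symmetric computation yields $\bar{s}$; with the reduction of the first paragraph this proves the returned interval is $\{g\le 0\}$, i.e.\ $\partial f(\bar{x})=[\ubar{s},\bar{s}]$, where the reported values lie in $\R\cup\{-\infty,+\infty\}$ once the unbounded cases above are accounted for.

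For the running time I would tally the three phases. The initial binary search for the index $i$ with $x_{i-1}<\bar{x}\le x_i$ runs in $O(\log n)$ over the sorted breakpoint column of $P$; the dichotomic search halves $u-l$ at each iteration while doing only $O(1)$ work --- one row access to $P$ and one pointwise evaluation $f^*(s_m)=s_m x_m-y_m$ --- hence $O(\log n)$ iterations; and \emph{Intersection} is $O(1)$. Summing gives $O(\log n)$, the essential gain over the $O(n)$ scheme of~\cite{BAJAJ-16} being that $f^*$ is never materialized in full. The step I expect to be the main obstacle is justifying the soundness of the dichotomic search, i.e.\ that each single sign test halves the bracket correctly. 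Beyond the decreasing/single-crossing lemma above, the delicate point is that although the upper bracket is initialized at $u=i$ and $s_i$ may itself fail $g(s_i)\le 0$, every midpoint satisfies $l+1\le m\le u-1\le i-1$, so each tested breakpoint obeys $s_m\le s_{i-1}\le\inf\partial f(\bar{x})$ and therefore lies in the region where $g$ is governed solely by the left crossing; thus $s_i$ serves only to contain $\ubar{s}$ and is never evaluated. Making this bracketing argument precise, together with the careful handling of open/closed endpoints and the $\pm\infty$ cases, is where the real work of the proof lies.
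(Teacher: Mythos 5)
Your proof is correct, but it takes a genuinely different route from the paper's own, which disposes of correctness in a single sentence: Algorithm~\ref{algPt} ``performs the same computation'' as Algorithm~2 of \cite{BAJAJ-16} (the linear-time level-set method, whose correctness is established there), after which the complexity is tallied exactly as you do --- one binary search, up to two dichotomic searches in sequence, and constant-time operations such as solving a quadratic. Everything you establish from first principles --- the level-set characterization of $\partial_\epsilon f(\bar{x})$ as $\{s : f^*(s)\le l_{\bar{x}}(s)\}$, convexity of $g=f^*-l_{\bar{x}}$ making $\{g\le 0\}$ a closed interval on which $g$ attains the value $-\epsilon$ over $\partial f(\bar{x})$, strict decrease of $g$ to the left of $\inf\partial f(\bar{x})$, the bracketing invariant $s_l\le\ubar{s}\le s_u$ with sign tests evaluated through the parametrization $f^*(s_m)=s_m x_m - y_m$ of Fact~\ref{f:equiv}, and constant-time recovery of the crossing by interpolating $f^*$ through $(s_j, s_j x_j - y_j)$ --- is left implicit in the paper, partly in the expository text of Section~\ref{s:algPt} and partly delegated to the cited reference. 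What the paper's citation buys is brevity and reuse of an already-validated algorithm; what your argument buys is self-containedness, and in particular it settles the one genuinely delicate point the one-line proof glosses over: soundness of the dichotomic search. Your observation that every tested midpoint satisfies $s_m\le s_{i-1}\le\inf\partial f(\bar{x})$, so the sign test is only ever applied in the strictly decreasing regime while $s_u=s_i$ merely serves to contain $\ubar{s}$ and is never itself tested, appears nowhere in the paper and is exactly the right resolution. Two small remarks: your reading of Fact~\ref{f:equiv} as giving the interpolated piece of $f^*$ slope $x_j$ at $s_j$ is correct (the paper's text states the derivative as $2a_j s_j+b_j$, evidently a slip), and the proposition's ``$\partial f(\bar{x})=[\ubar{s},\bar{s}]$'' is itself a typo for $\partial_\epsilon f(\bar{x})$, which is the set you prove the algorithm returns.
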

		
		\begin{proof}
		The algorithm performs the same computation as \cite[Algorithm 2]{BAJAJ-16}, which proves its correctness. Its complexity is logarithmic since it performs constant time operations (solving a quadratic equation), a binary search, and up to 2 dichotomic searches in sequential order.
		\end{proof}
		
To validate the complexity numerically, we build a convex plq function with a large number of pieces by sampling the function $f(x)=x^4$, then building a (zeroth-order) piecewise linear approximation and finally building its Moreau envelope. The resulting function is convex PLQ and is alternating between being linear and quadratic. We then time the original algorithm and our new algorithm. When the function contains around $40,000$ pieces, the original linear-time algorithm takes $16.4$ seconds while our algorithm takes $0.03$ seconds on a desktop quad-core hyperthreading Intel Xeon with 64GB of memory running Windows 7 64 bits and Scilab 5.5.2 (64 bits). Table~\ref{t:timing1} shows the resulting timings.  

% Table generated by Excel2LaTeX from sheet 'Yves_Lamport'
\begin{table}[htbp]
  \centering
  \caption{Computation time comparison between the linear time original algorithm from \cite{BAJAJ-16} (column $O(n)$) and our new logarithmic time algorithm (column $O(\log n)$; $n$ is the number of pieces of the plq function used. Computation times are in seconds.}
    \begin{tabular}{rrr}
		\hline
    \multicolumn{1}{l}{n} & \multicolumn{1}{l}{$O(n)$} & \multicolumn{1}{l}{$O(\log n$)} \\
		\hline
    4,000  & 1.0   & 0.02 \\
    8,000  & 2.3   & 0.02 \\
    12,000 & 3.8   & 0.03 \\
    16,000 & 5.3   & 0.03 \\
    20,000 & 7.3   & 0.03 \\
    24,000 & 9.2   & 0.03 \\
    28,000 & 11.4  & 0.03 \\
    32,000 & 12.9  & 0.02 \\
    38,000 & 14.5  & 0.02 \\
    40,000 & 16.4  & 0.03 \\
		\hline
    \end{tabular}%
  \label{t:timing1}%
\end{table}%

%\begin{figure}%
%\centering
%\includegraphics[trim=1cm 8cm 1cm 8cm, clip, width=\textwidth]{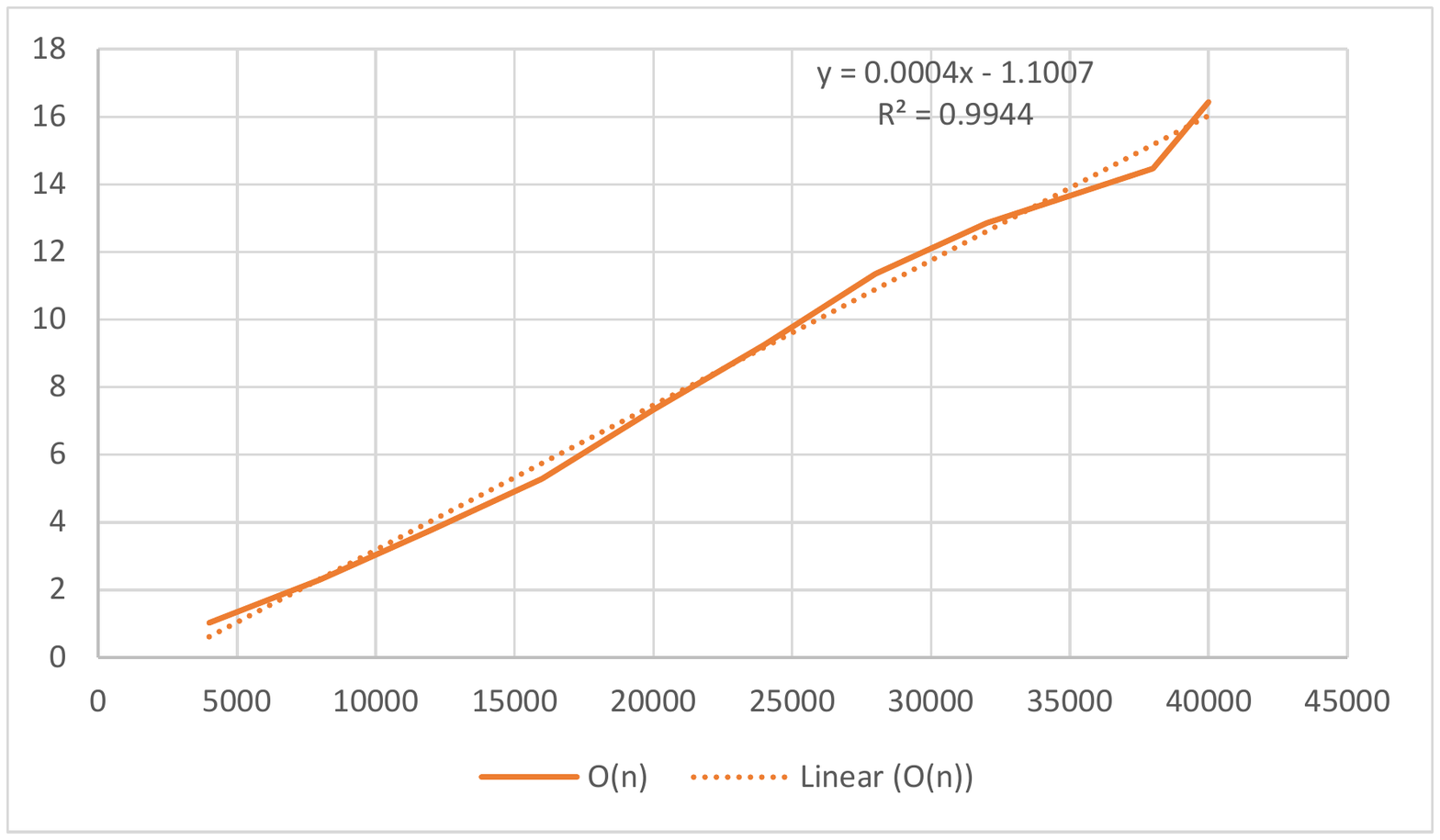}%
%\caption{Computation time of \cite{BAJAJ-16} algorithm that runs in $O(n)$ vs. Algorithm~\ref{algPt} that runs in $O(\log n)$.}%
%\label{f:timing}%
%\end{figure} 

\section{Computing \texorpdfstring{$\gph \partial_\epsilon f$}{algGph}}	\label{s:algGph}
The graph of $\partial_\epsilon f$ is defined as 
\[
\gph \partial_\epsilon f = \{ (x,s) : s\in \partial_\epsilon f(x) \}.
\]
Our objective in this section is to fully describe $\gph \partial_\epsilon f$ for bivariate convex PLQ functions. 

First, we note that we only need to describe the lower and upper bound of $\gph \partial_\epsilon f$. Define $\ubar{g}, \bar{g}: \R\to \R\cup\{-\infty, +\infty\}$ by $\ubar{g}(x) = \inf \partial_\epsilon f(x) = \inf\{ s : s\in \partial_\epsilon f(x)\}$, and $\bar{g}(x) = \sup \partial_\epsilon f(x)$. Then $\gph \partial_\epsilon f = \{ (x,s) : \ubar{g}(x) \leq s \leq \bar{g}(x) \}$.
	
Next, only $\ubar{g}$ needs to be computed as the following lemma, which follows directly from the definition of $\partial_\epsilon f(x)$, indicates.

\begin{lemma}\label{l:ub}
Assume $\epsilon>0$, $f$ is a proper function, and denote $h(x)=f(-x)$. Then 
\[
s \in \partial_\epsilon f(x) \Leftrightarrow -s \in \partial_\epsilon h(-x)
\] 
and in particular
\[
\sup \partial_\epsilon f(x) = - \inf \partial_\epsilon h (-x).
\]
\end{lemma}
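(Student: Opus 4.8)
The plan is to establish the stronger set identity $\partial_\epsilon f(x) = -\partial_\epsilon h(-x)$ directly from the defining inequality, since the displayed equivalence is exactly this identity read elementwise, and the ``in particular'' assertion then follows by taking suprema. Conceptually, the reflection $h = f(-\cdot)$ acts only on the variable, so I expect it to flip the sign of every admissible slope while leaving the tolerance $\epsilon$ and the relevant function values untouched.

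First I would unfold the definition of the $\epsilon$-subdifferential: $s \in \partial_\epsilon f(x)$ means $f(y) \geq f(x) + s(y-x) - \epsilon$ for all $y \in \R$. Separately, using $h(z) = f(-z)$ and in particular the anchor value $h(-x) = f(x)$, the target membership $-s \in \partial_\epsilon h(-x)$ reads $h(z) \geq h(-x) - s(z+x) - \epsilon$ for all $z \in \R$, that is, $f(-z) \geq f(x) - s(z+x) - \epsilon$ for all $z$.

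Next I would apply the change of variable $y = -z$. Because $z \mapsto -z$ is a bijection of $\R$, the universal quantifier over $z$ is equivalent to the universal quantifier over $y$, and the right-hand side $f(x) - s(z+x) - \epsilon$ becomes $f(x) + s(y-x) - \epsilon$. The two families of inequalities then coincide, giving $s \in \partial_\epsilon f(x) \Leftrightarrow -s \in \partial_\epsilon h(-x)$, equivalently $\partial_\epsilon f(x) = -\partial_\epsilon h(-x)$. Taking the supremum over this set identity yields $\sup \partial_\epsilon f(x) = \sup\bigl(-\partial_\epsilon h(-x)\bigr) = -\inf \partial_\epsilon h(-x)$, which is the second claim.

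I do not anticipate a genuine obstacle here, as the argument is a single change of variable; the only points deserving a word of care are the empty-domain case and the bookkeeping of the reflection. For the former, note $x \notin \dom f \iff -x \notin \dom h$ (since $h(-x) = f(x)$), so both $\epsilon$-subdifferentials are empty simultaneously and the conventions $\sup \emptyset = -\infty$, $\inf \emptyset = +\infty$ keep the final formula valid. For the latter, I would simply verify explicitly that $\epsilon$ and the two anchor values $f(x) = h(-x)$ pass through the substitution unchanged, so that no spurious sign or constant is introduced.
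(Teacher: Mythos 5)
Your proof is correct and takes the same route the paper intends: the paper gives no explicit proof, stating only that the lemma ``follows directly from the definition of $\partial_\epsilon f(x)$,'' and your change of variable $y=-z$ in the defining inequality, together with $h(-x)=f(x)$, is exactly that direct verification. Your extra care with the empty case ($x\notin\dom f$ iff $-x\notin\dom h$, with the conventions $\sup\emptyset=-\infty$, $\inf\emptyset=+\infty$) goes slightly beyond what the paper records and is sound.
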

	
So we only need to compute $\ubar{g}$; then we apply the same algorithm to $h(x)=f(-x)$ to deduce the upper bound.

Before explaining the algorithm, we define our notations. Given $\epsilon>0$ and a convex lsc PLQ function $f$, we note $\xb$ the point at which we wish to evaluate $\ubar{g}(\xb)$. We name $\cal L$ the line going through $(\xb, \yb-\epsilon)$ with $\yb=f(\bar{x})$ and tangent to the graph of $f$ at a point $(\xt, \yt)$ with $\yt=f(\xt)$ and $\xt < \bar{x}$. The line $\cal L$ has equation $y = (x - \xt) + \yt$ and is illustrated on Figure~\ref{f:t1} when $f$ is smooth at $\xt$ and on Figure~\ref{f:t2} when $f$ is nonsmooth at $\xt$.

\begin{figure}%
\centering
\includegraphics[width=.7\columnwidth]{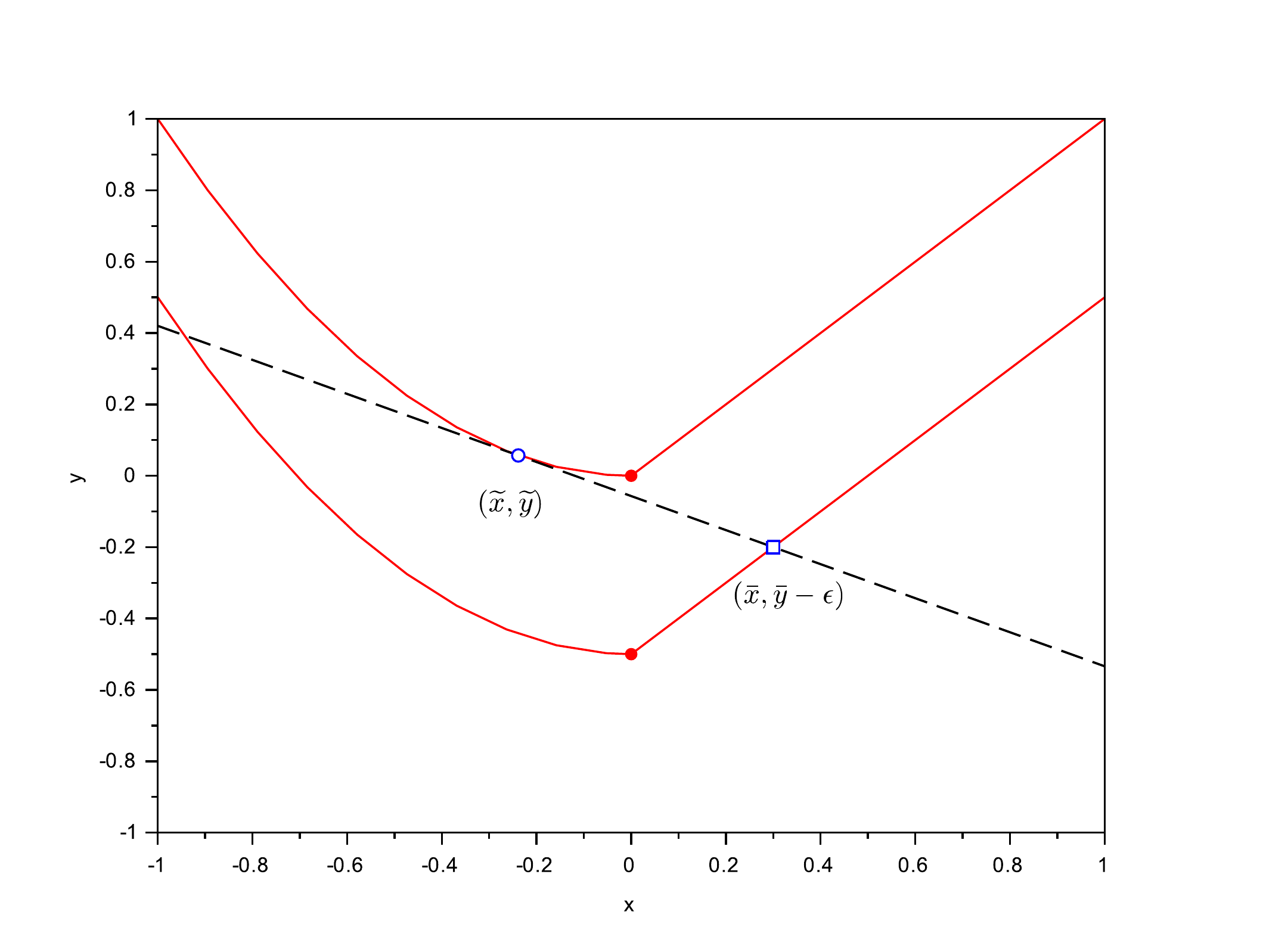}%
\caption{Case 1: $f$ smooth at $(\xt, \yt)$. The dashed line is $\cal L$.}%
\label{f:t1}%
\end{figure}

\begin{figure}%
\centering
\includegraphics[width=.7\columnwidth]{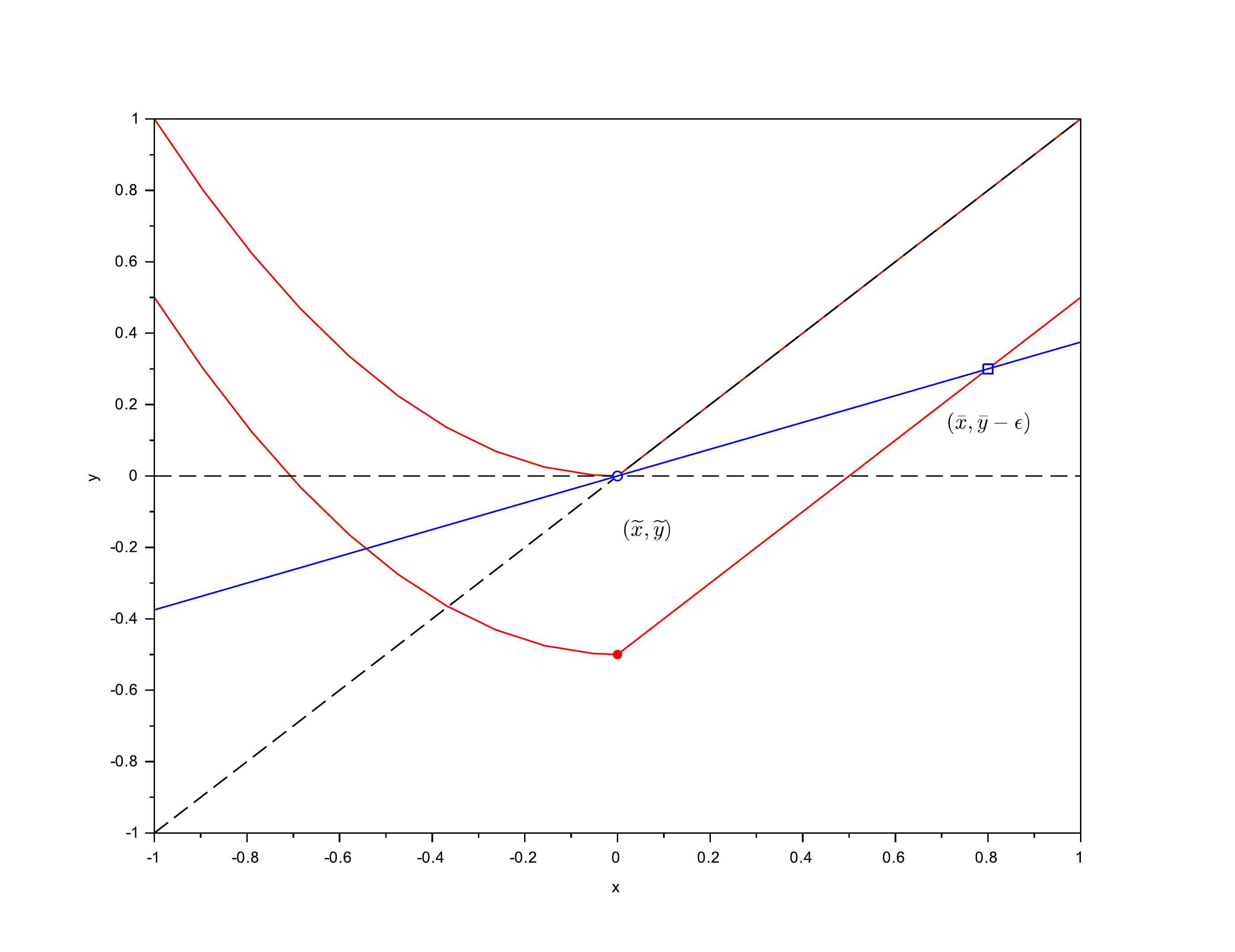}%
\caption{Case 2: $f$ nonsmooth at $(\xt, \yt)$. The dashed lines visualize the subdifferential $\partial f(\xt)$ while the blue line is $\cal L$.}%
\label{f:t2}%
\end{figure}

The function $\ubar{g}$ is a piecewise function but not a PLQ function. In order to make our algorithm more generic, we adopt a data structure that only stores enough information to evaluate $\ubar{g}$ with the knowledge of the PLQ matrix $P$ of $f$. We will store the output using an $m\times 5$ matrix $L$ where each row stores a piece of $\ubar{g}$ in the format $[x, t, \tilde{i}, \bar{i}, v]$. Similarly to a PLQ matrix, the first column stores a sorted array of points. The second column stores a type index $t$ where $t=1$ if the function $f$ is smooth at the point $\xt$ associated with $\xb$ (see Figure~\ref{f:t1}) while $t=2$ if $f$ is nonsmooth at $\xt$ (see Figure~\ref{f:t2}). The value $t=3$ indicates that $\partial_\epsilon f$ is constant on that interval with value $v$ (see Figure~\ref{f:t3}). The index $\tilde{i}$ (resp. $\bar{i}$) refers to the index in the PLQ matrix of the piece storing $\xt$ (resp. $\xb$). 

\begin{figure}%
\centering
\includegraphics[width=.7\columnwidth]{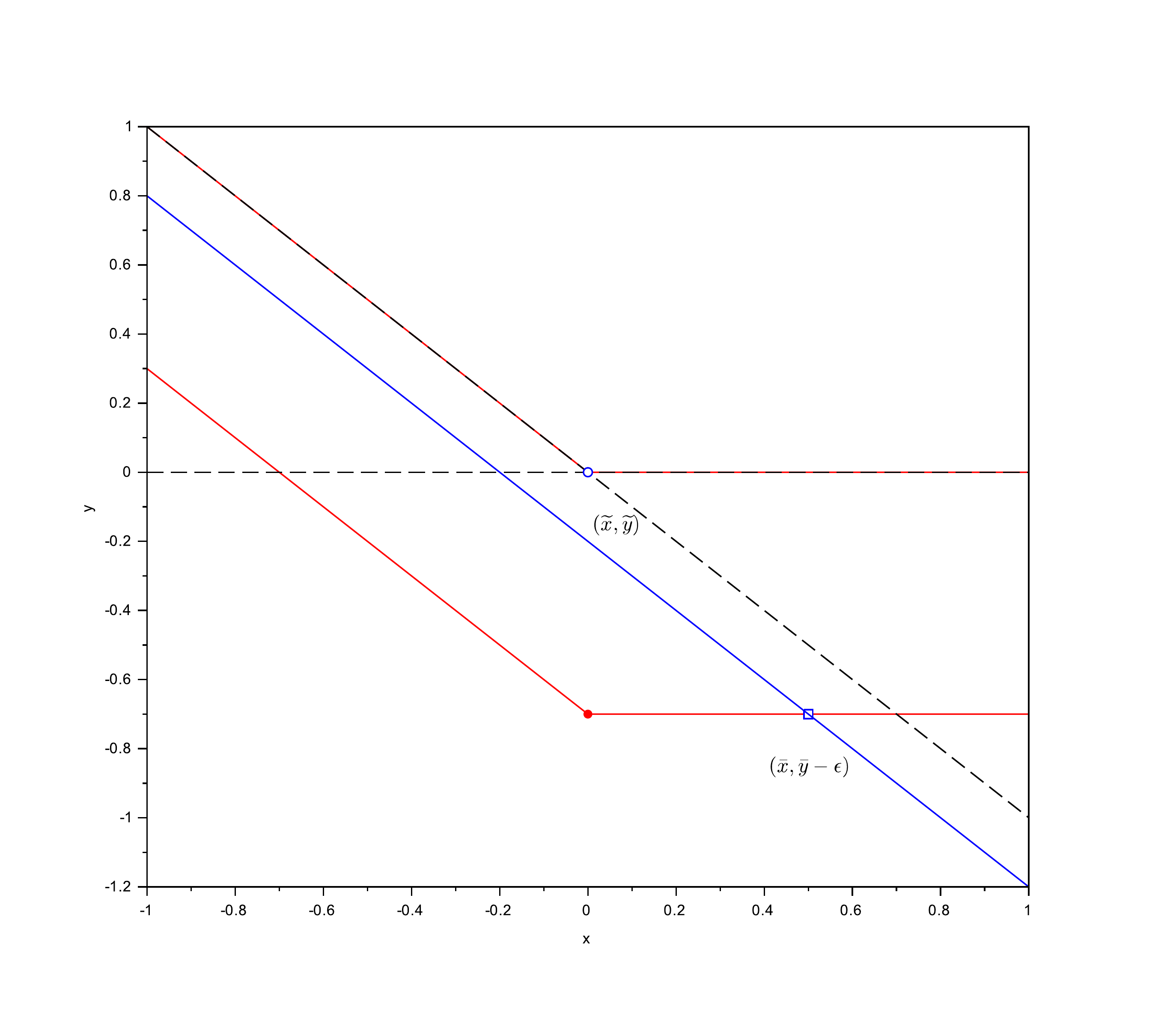}%
\caption{Case 3: $\inf \partial_\epsilon f$ constant around $(\xb, \yb)$.}%
\label{f:t3}%
\end{figure}

Once computed, the matrix $L$ allows us to obtain $\ubar{g}(\xb)$ as follow. Giving $\xb$, we find the row $k$ in $L$ such that $L(1,k-1) < \xb \leq L(k,i)$. We then obtain the type $t_k$. If $t_k=3$, we immediately return $\ubar{g}(\xb)=v_k$. If $t_k=2$, we return $s$, the slope of the line $\cal L$ going through $(\xb, \yb-\epsilon)$ and $(\xt, \yt)$ where $\xt=P(\tilde{i}_k,1)$ and $\yt=f(\xt)$. Otherwise, $t_k=1$ and we return the slope $s$ of the line $\cal L$ going through $(\xb, \yb-\epsilon)$ and tangent to the quadratic function $q_k$, which corresponds to the $\tilde{i}_k$ piece of $P$. The later requires solving a quadratic equation. Our argument proved the following result.

\begin{proposition}
Given a PLQ function $f$ with $n+1$ pieces stored as PLQ matrix $P$, $\epsilon>0$, the matrix $L$ storing the function $\ubar{g} = \inf \partial_\epsilon f$, and a point $\xb$; the value $\ubar{g}(\xb)$ can be computed in logarithmic time. 

In addition, given a PLQ matrix corresponding to $h(x)=f(-x)$, and a matrix $U$ storing $\bar{g}=\sup \partial_\epsilon f$; the value $\bar{g}(\xb)$ can be computed in logarithmic time. Consequently, the set $\partial_\epsilon f(\xb)$ can be computed in logarithmic time.

Given a sorted set $X_b$ of $m$ values for $\xb$ and all of the above input, computing $\partial_\epsilon f(\xb)$ for $\xb \in X_b$ can be performed in $O(\min(m \log n, n + m))$.
\end{proposition}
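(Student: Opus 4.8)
The plan is to treat the three assertions in order, regarding the two single-point claims as a direct consequence of the evaluation procedure described just before the statement and reserving the real work for the batch bound $O(\min(m\log n, n+m))$. For the first claim I would, given $\xb$, locate the row $k$ of $L$ with $L(k-1,1) < \xb \le L(k,1)$ by a binary search on the sorted first column of $L$; since $L$ has $O(n)$ rows this costs $O(\log n)$. Once $k$ is fixed, reading off the type $t_k$ and the indices $\tilde{i}_k, \bar{i}_k$ is constant time, and each of the three cases returns $\ubar{g}(\xb)$ in constant time: for $t_k=3$ we output the stored value $v$; for $t_k=2$ we output the slope of the line through $(\xb,\yb-\epsilon)$ and $(\xt,\yt)$ with $\xt=P(\tilde{i}_k,1)$ and $\yt=f(\xt)$ evaluated on one known piece; and for $t_k=1$ we solve a single quadratic equation for the tangent slope. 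Hence $\ubar{g}(\xb)$ is obtained in $O(\log n)$.

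For the upper bound I would invoke Lemma~\ref{l:ub}: writing $h(x)=f(-x)$ we have $\sup \partial_\epsilon f(\xb) = -\inf \partial_\epsilon h(-\xb)$, so applying the identical evaluation procedure to the PLQ matrix of $h$ together with the matrix $U$ (which plays the role of $L$ for $h$) at the point $-\xb$, and negating, yields $\bar{g}(\xb)$ in $O(\log n)$. Combining the two values produces $\partial_\epsilon f(\xb)=[\ubar{g}(\xb),\bar{g}(\xb)]$ in logarithmic time.

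For the batch statement I would exhibit two algorithms and retain the cheaper one. Running the single-point procedure independently at each of the $m$ points of $X_b$ gives $O(m\log n)$. Alternatively, since $X_b$ and the first column of $L$ are both sorted, I would perform a single linear merge: advance one pointer through the breakpoints of $L$ and one through $X_b$ so that each query point is matched to its containing piece by one monotone scan; each piece of $L$ and each query point is visited once and incurs constant evaluation work, for a total of $O(n+m)$. The upper bound is handled by an analogous merge against $U$; because $-\xb$ reverses the order of $X_b$, this second sweep runs in the opposite direction, but it is still a monotone merge of two sorted sequences and costs $O(n+m)$. Comparing $m$ against $n$ at the outset to decide which of the two algorithms to execute then yields the claimed $O(\min(m\log n, n+m))$.

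The main obstacle is justifying the linear merge rather than the single-point bound, which is routine. I must confirm that the row index into $L$ (and into $U$) can be advanced monotonically as $\xb$ increases, which relies on the first columns of $L$ and $U$ being sorted and on $X_b$ being sorted, and that the per-query evaluation in each of the three cases stays constant time once the containing piece has been located. A secondary point to pin down is that $L$ and $U$ each have $O(n)$ rows, so that the binary-search variant genuinely costs $O(\log n)$ per query; this is guaranteed by the construction of these matrices described above.
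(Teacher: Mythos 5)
Your proposal is correct and follows essentially the same route as the paper's proof: binary search on the sorted first column of $L$ plus a constant-time case analysis per type (with the $t_k=1$ case resting on the quadratic-equation lemma, i.e.\ Lemma~\ref{l:cst}), the symmetry of Lemma~\ref{l:ub} applied to $h$ and $U$ for the upper bound, and for the batch query a choice between $m$ repeated binary searches and a single monotone merge of the two sorted lists, giving $O(\min(m\log n, n+m))$. Your added observations --- that $L$ has $O(n)$ rows and that the sweep against $U$ runs in reversed order because of the $-\xb$ substitution --- are details the paper leaves implicit, but they do not change the argument.
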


\begin{proof}
First, given a single value $\xb$, we search for the row in $L$ such that $L(k-1,1) < \xb \leq L(k,1)$ using binary search. Then a constant time calculation gives $\ubar{g}(\xb)$ (obvious for $t_k=3$ and $t_k=2$; by Lemma~\ref{l:cst} below for $t_k=1$). We perform the same computation for $U$ and $\bar{g}$ to obtain $\partial_\epsilon f(\xb)$ in logarithmic time.

Then given a sorted set $X_b$ of $m$ values, we can repeat the same computation in $O(m \log n)$ worst-case time. Alternatively, when $m$ is large, we can do a linear search of $L(:,1)$ and consider each point in $X_b$ or $L(:,1)$ only once, resulting in a $O(n+m)$ evaluation algorithm.
\end{proof}

The previous result relies on solving the following equations in constant time. The algorithm performs that computation in the subroutine esub\_compute\_xb whose code is not included since it only includes special cases checks and solving a quadratic (the full algorithm is available for download in the CCA toolbox or by contacting the corresponding author).

\begin{lemma}\label{l:cst}
Defined $\xt < \xb$ as the point on the graph of $f$ at which the line going through $(\xb, f(\xb)-\epsilon)$ is tangent to $\gph f$ with $\epsilon>0$. Assume $f$ is differentiable at $\xt$. Note $\tilde{i}$ (resp. $\bar{i}$) the row index in the PLQ matrix of $f$ for the piece containing $\xt$ (resp. $\xb$), i.e. $P(\tilde{i}-1,1) < \xt \leq P(\tilde{i},1)$ (resp. $P(\bar{i}-1,1) < \xb \leq P(\bar{i},1)$). Given $\xt$, $\tilde{i}$, and $\bar{i}$, the point $\xb$ is a solution of 
\[
\begin{cases}
\yb - \epsilon & =  \st (\xb - \xt) + \yt,\\
\st & =  p'_{\tilde{i}} ( \xt), \\
\yb & =  p_{\bar{i}}(\xb),\\
\yt & =  p_{\tilde{i}}(\xt);
\end{cases}
\]
where $p_k$ is the quadratic function corresponding to the $k$\textsuperscript{th} piece of $f$, and $p'_k$ is the derivative of $p_k$. Conversely, given $\xb$, the point $\xt$ can be computed as the solution to the same equations.
\end{lemma}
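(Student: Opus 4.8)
The plan is to treat the lemma as a purely geometric statement about the tangent line $\cal L$ and to reduce each of its two directions to solving a single quadratic. First I would record why the stated system is the correct one. On the piece of index $\tilde i$ the function $f$ coincides with the quadratic $p_{\tilde i}$, so differentiability of $f$ at $\xt$ forces the tangent slope to be $\st = p'_{\tilde i}(\xt)$ (the second equation) and the contact value to be $\yt = p_{\tilde i}(\xt)$ (the fourth equation). The line $\cal L$ of slope $\st$ through $(\xt,\yt)$ has equation $y = \st(x-\xt)+\yt$, and imposing that it pass through $(\xb,\yb-\epsilon)$ is exactly the first equation, while the third equation $\yb = p_{\bar i}(\xb)$ merely records $\yb = f(\xb)$ on the piece of index $\bar i$. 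For motivation I would also note, using the characterization $\partial_\epsilon f(\xb)=\{s : f^*(s)\le l_{\xb}(s)\}$ together with Fact~\ref{f:equiv}, that this tangent slope $\st$ is precisely $\inf\partial_\epsilon f(\xb)=\ubar{g}(\xb)$; this is the fact already asserted in the text preceding the lemma, so solving the system is what produces the value returned in the $t=1$ case.

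For the forward direction (given $\xt$, find $\xb$) I would eliminate $\st$ and $\yt$ via the second and fourth equations and substitute the first equation into the third, leaving the single equation $p_{\bar i}(\xb) - \epsilon = \st(\xb-\xt)+\yt$ in the unknown $\xb$. Since $p_{\bar i}$ is at most quadratic, this has degree at most two and is solvable in constant time.

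For the converse (given $\xb$, find $\xt$) I would instead keep $\xt$ as the unknown, read off $\yb = p_{\bar i}(\xb)$ from the third equation, and substitute $\st = p'_{\tilde i}(\xt)$ and $\yt = p_{\tilde i}(\xt)$ into the first. Writing $p_{\tilde i}(x)=a x^2 + b x + c$, a short expansion collapses the right-hand side to give $\yb - \epsilon = -a\,\xt^2 + 2a\,\xb\,\xt + b\,\xb + c$, again a quadratic in $\xt$, solvable in constant time. Thus both directions reduce to a single quadratic, which is the constant-time claim the surrounding proposition invokes.

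The main obstacle is not the algebra but root selection and degeneracy. Each quadratic has up to two roots, and I must argue that exactly one lands in the prescribed piece and respects $\xt < \xb$; convexity of $f$ together with the well-definedness of the contact point $\xt$ built into the hypothesis should single out the admissible root. I would then treat the degenerate case of a linear piece, where the leading coefficient vanishes: in the forward direction the equation drops to first order and determines $\xb$ uniquely, whereas in the converse direction $\xt$ disappears from the equation altogether, reflecting that the tangent line coincides with the linear piece and only its slope $\st$ is determined \textemdash\ which is all that $\ubar{g}(\xb)$ requires. These are exactly the ``special cases checks'' that the text defers to the subroutine, and making them explicit is the only delicate part of the argument.
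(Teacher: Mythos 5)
Your proposal is correct and follows essentially the same route as the paper's proof: translate the tangency and incidence facts into the four equations, eliminate $\st$ and $\yt$ by substitution to arrive at the single quadratic $\qfct{\bar{i}}{\xb} - \epsilon = (2 a_{\tilde{i}} \xt + b_{\tilde{i}})(\xb - \xt) + \qfct{\tilde{i}}{\xt}$ (the paper's equation~\eqref{e:q}, which serves both directions), solve it in constant time, and use the geometric position of $\xt$ relative to $\xb$ to select the admissible root. Your explicit expansion of the converse direction and your discussion of the degenerate linear piece just spell out details the paper defers to the \emph{esub\_compute\_xb} subroutine's special-case checks, so no substantive difference or gap.
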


\begin{proof}
The equations translate the facts that $(\xb, \yb-\epsilon)$ is on the line going through $(\xt, \yt)$ with slope $\st$ with $\st$ the derivative of $f$ at $\xt$, and $\yb$ (resp. $\yt$) the image of $\xb$ (resp. $\xt$) by $f$.

Substituting the variables and the coefficient of the quadratic functions $p_i(x)=\qfct{i}{x}$, we obtain the quadratic equation 
\begin{equation}
\qfct{\bar{i}}{\xb} - \epsilon = (2 a_{\tilde{i}} \xt + b_{\tilde{i}}) (\xb - \xt) + \qfct{\tilde{i}}{\xt},
\label{e:q}
\end{equation}
which can be solved explicitly in constant time. While that quadratic equation may have 0, 1, or 2 solutions, the assumptions always ensure there is at least one solution, and the geometric positions of $\xt$ with respect to $\xb$ always allow us to pick the correct root.
\end{proof}

\begin{remark}
It is always possible to adopt an explicit data structure similar to the PLQ matrix by expliciting the formula for $\st$ as a function of $\xb$. The resulting data structure only works with PLQ functions while the one suggested will be extended beyond PLQ functions in the following section.

More precisely, $\ubar{g}$ is a piecewise function. Around a point $\xb$, the type $t_k$ indicates the explicit formula to use. If $t_k=3$, $\ubar{g}(\xb)=v$, i.e. $\ubar{g}$ is constant around $\xb$. If $t_k=2$, $\ubar{g}(\xb)=(\yb-\yt)/(\xb-\xt)$ or more explicitly,
\[
\ubar{g}(x)=\frac{\qfct{\bar{i}}{x}-\yt}{x-\xt}
\]
where $\xt$ is constant on that interval. Finally, if $t_k=3$, $\ubar{g}(\xb)$ is the solution of \eqref{e:q} where $\xt$ is now the moving variable, i.e. on that interval
\[
\ubar{g}(x) = -2 a_{\tilde{i}} x \pm \sqrt{2(a_{\tilde{x}} x)^2 + a_{\tilde{i}}
\left(
	\qfct{\bar{i}}{x} - b_{\tilde{i}} x + c_{\tilde{i}} -\epsilon
\right)
}.
\]
In conclusion, the function $\ubar{g}$ is piecewise and on each piece it is either a constant, a rational function, or a linear plus the square root of a quadratic function.
\end{remark}

\begin{example}
For example, the absolute value function has PLQ matrix 
\[
P =\begin{bmatrix}
0 & 0 & -1 & 0\\
\infty & 0 & 1 & 0
\end{bmatrix}
\]
and its associated function $\ubar{g}=\inf \partial_\epsilon f$ is stored as
\[
L = \begin{bmatrix}
0.25 & 3 & \nan & \nan & -1\\
\infty & 2 & 1 & 2 & \nan
\end{bmatrix}.
\]
The $\epsilon$-subdifferential of the absolute value is illustrated on Figure~\ref{f:absprimal} and its full graph is plotted on Figure~\ref{f:absdual}.

The value $\nan$ refers to the standard IEEE 754 Not a Number value. In our context, it indicates that the value should not be used i.e. it is irrelevant. So the first row of $L$ indicates that for any value $\xb \leq 0.25$ $\ubar{g}(\xb)=-1$ (column 2 value of $3$ means it is type $t=3$ i.e. a constant value equal to $v=-1$). The second row of $L$ means that for any $0.25 < \xb < \infty$ the value $\ubar{g}(\xb)$ can be computed as the slope $s$ of the line going through $(\xb, \yb-\epsilon)$ and $(\xt, \yt)$ with $\xt$ (resp. $\xb$) belonging to the piece of $f$ with index $\tilde{i}=1$ (resp ($\bar{i}=2$). In this case, the line goes through $(\xb, \xb-\epsilon)$ and $(0, 0)$ so $s = (\xb - \epsilon)/\xb = 1 - \epsilon / \xb$. 
In the general case, using the information in $L$ with the matrix $P$, we can compute $\ubar{g}(\xb)$. 

Note that since $|-x|=|x|$, $\gph \partial_\epsilon f$ is symmetric with respect to $(0,0)$ as predicted by Lemma~\ref{l:ub}.
\end{example} 

\begin{figure}[t]%
\centering
\includegraphics[width=0.7\columnwidth]{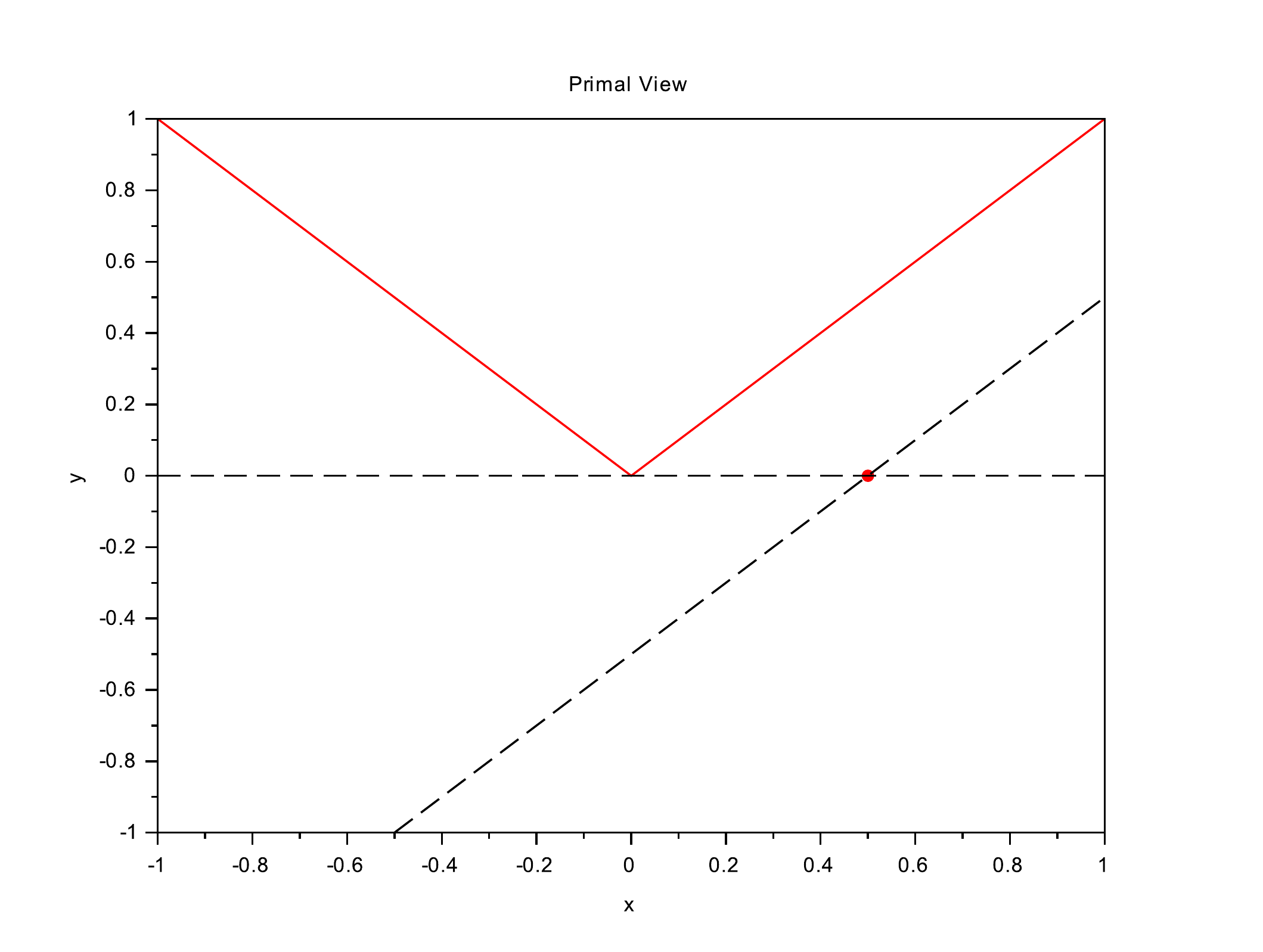}%
\caption{The $\partial_\epsilon f(\xb)$ for $f(x)=|x|$, $\epsilon=0.5$, and $\xb=0.5$. The slope of the horizontal dashed line equals $\ubar{g}(\xb)=\inf \partial_\epsilon f(\xb)$ while the slope of the other dashed line is equal to $\bar{g}(\xb)=\sup \partial_\epsilon f(\xb)$}%
\label{f:absprimal}%
\end{figure}

\begin{figure}%
\centering
\includegraphics[width=0.7\columnwidth]{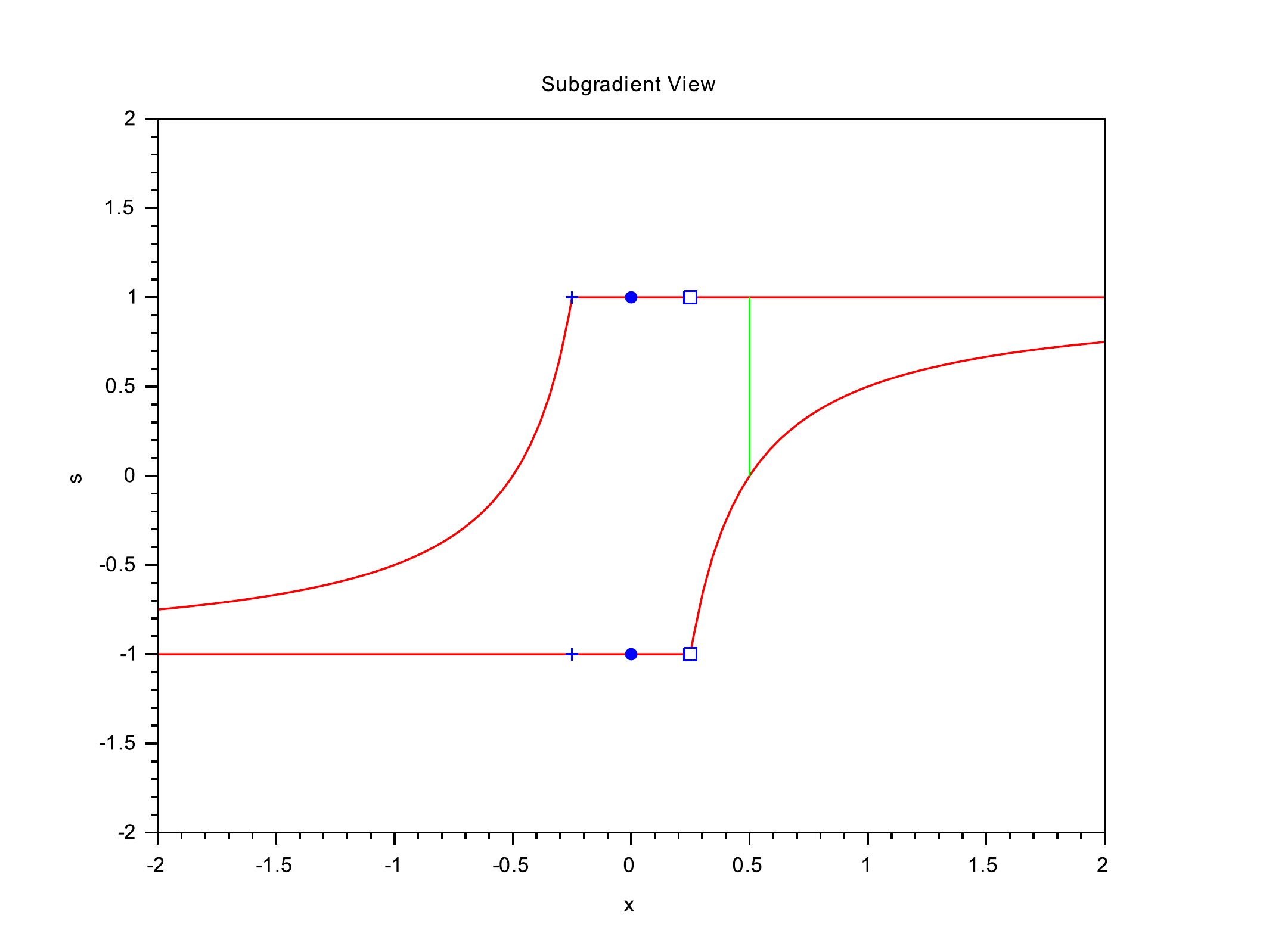}%
\caption{The $\partial_\epsilon f(\xb)$ for $f(x)=|x|$, and $\epsilon=0.5$. The green line shows $\partial_\epsilon f(\xb)$. The lower bound is the function $\ubar{g}=\inf \partial_\epsilon f$; it is composed of 2 pieces: a constant piece with value $-1$ for any $\xb\leq 0.25$, and a square root piece for $\xb > 0.25$. The cross (resp. dot, square) indicates breakpoints in the upper-bound curve (resp. PLQ function $f$, lower-bound curve).}%
\label{f:absdual}%
\end{figure}

The algorithm outline is as follow. The first step of the algorithm is to handle special cases: indicator functions of a point, linear functions, and quadratic functions, i.e. when the function $f$ is not piecewise defined. Next, the algorithm initializes the output data structure with the leftmost piece of the PLQ function. Then the main loop sweeps through tangent points $\tilde{x}$ associated with a given point $\bar{x}$ at which we compute $\ubar{g}(\bar{x})$. Finally, the rightmost piece is handled by considering all 3 possibilities: the function equals $+\infty$, the function is linear, or the function is quadratic.

\begin{algorithm}[th!]
\caption{Computing $\gph \partial_\epsilon f$; Initialization}
\label{a:gphInit}
\begin{algorithmic}[1]
	\Function{esub\_gph}{$\plqf, \epsilon$}
		\State n = size(plq,1); x = plq(:, 1);
		\If{n==1}
			\State Handle needle, linear, and quadratic functions
		\EndIf
		\Statex INITIALIZATION: handle 1\textsuperscript{st} interval
		\If{Domain left bounded}
			\State lb(1,:) = $[x(1), 3, \nan, 1, \nan]$;\Comment{$\partial_\epsilon f$ empty outside domain}
			\If{2\textsuperscript{nd} piece is quadratic}
				\State Call \_update\_nonsmooth\_xt
			\EndIf
		\ElsIf{1\textsuperscript{st} piece is linear}
			\State Compute largest index $\bar{i}$ for which $\cal L$ is tangent to $x(1)$
			\State xbm = esub\_compute\_xb
			\State lb(1,:) = $[ \text{xbm}, 3, \nan, \nan, s_1]$;\Comment{Unless special case}
			\If{$f$ nonsmooth at $x(1)$}
				\State Call \_update\_nonsmooth\_xt
			\EndIf
		\Else \Comment{1\textsuperscript{st} piece is quadratic}
			\State lb(1,:) = $[x(1), 1, 1, 1, \nan]$;
			\While{$(\xb, \yb - \epsilon)$ is below line $\cal L$}
				\State lb(k,:)=[x(ib), 1, 1, ib, \nan];
				\State k++; ib++; Update $(\xb, \yb - \epsilon)$				
			\EndWhile
			\State xbm = esub\_compute\_xb
			\State lb(1,:) = $[ \text{xbm}, 3, \nan, \nan, s_1]$;\Comment{Unless special case}
			\If{$f$ nonsmooth at $x(1)$}
				\State Call \_update\_nonsmooth\_xt
			\EndIf			
		\EndIf
		\algstore{abreak1}
\end{algorithmic}
\end{algorithm}

Algorithm~\ref{a:gphInit} shows the first part of our algorithm that focuses on the initialization. The cases of a needle function (indicator function of a single point), a linear function, and a quadratic function are handled directly. Then the first piece of $f$ is considered; it is either $\infty$ (domain is left-bounded), linear, or quadratic. For each case, the first row of the lb matrix is computed (our code use the variable lb to store the matrix $L$). In addition, when the first piece is linear or quadratic, the line $\cal L$ may be above several pieces of the function $f-\epsilon$. So we need to store all the resulting rows in the lb matrix. This is achieved by looping on the index ib (variable name corresponding to $\tilde{i}$) till the point $(\xb, \yb -\epsilon)$ is no longer below the line $\cal L$, where $\cal L$ has slope $\inf \partial f(\xt)$. Similarly, we need to store the same information in the matrix lb for the line $\cal L$ with slope $\sup \partial f(\xt)$, which is performed in the function \_update\_nonsmooth\_xt displayed in Algorithm~\ref{a:updateNonsmooth}.

\begin{algorithm}[th!]
\caption{Subroutine \_update\_nonsmooth\_xt}
\label{a:updateNonsmooth}
\begin{algorithmic}[1]
	\Function{\_update\_nonsmooth\_xt}{$x_1$, $y_1$, $s_1$}
		\State ib = ibStart; 
		\Statex Line $\cal L$ goes through $(x_1, y_1)$ with slope $s_1$
		\While{$(\xb, \yb - \epsilon)$ is below line $\cal L$}
				\State lb(k,:)=[x(ib), 2, it, ib, \nan];
				\State k++; ib++; Update $(\xb, \yb - \epsilon)$				
			\EndWhile
			\State xbm = esub\_compute\_xb \Comment{Solve appropriate quadratic equation}
			\State lb(k,:) = $[ \text{xbm}, 2, it, ib, \nan]$;\Comment{Unless special case} \label{al:brpt2}
	 \State \textbf{Return} updated lb
\EndFunction
\end{algorithmic}
\end{algorithm}

The main loop of the algorithm relies on the fact that breakpoints can only appear from the original breakpoints in the PLQ matrix or from the intersection of the line $\cal L$ with the graph of $f - \epsilon$; see Figure~\ref{f:mainloop}. We also need to carefully distinguish between linear and quadratic pieces to record the correct information. Algorithm~\ref{a:gphMain} describes the main loop.

\begin{figure}%
\centering
\includegraphics[width=.7\columnwidth]{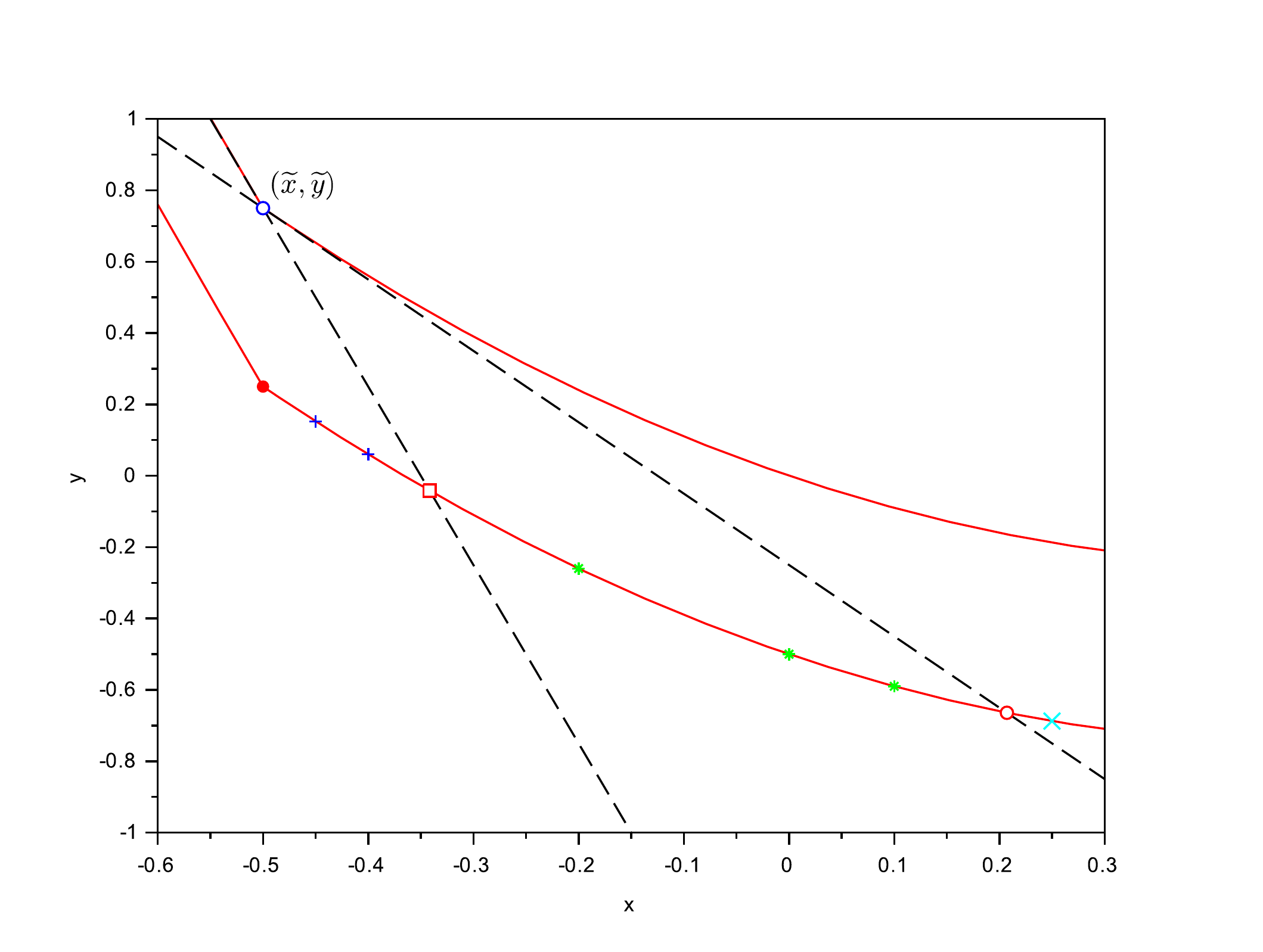}%
\caption{Main loop of the algorithm. Assume $(\xt, \yt)$ is a breakpoint between piece $i-1$ and piece $i$. The line $\cal L$ for points indicated as blue pluses is tangent to $\gph f$ on Piece $i-1$; hence these points will have type $t_k=1$. The line $\cal L$ going through points in green asterixes touches $\gph f$ at $(\xt, \yt)$ so those points have type $t_k=2$. The red square point is added to the breakpoints of $L$ storing $\ubar{g}$ in Algorithm~\ref{a:gphMain}, Line~\ref{al:brpt1} while the red circle point is added by Algorithm~\ref{a:updateNonsmooth}, Line~\ref{al:brpt2}. The point plotted as cyan multiply is part of the next iteration of the main loop.}%
\label{f:mainloop}%
\end{figure}

\begin{algorithm}
\caption{Computing $\gph \partial_\epsilon f$; Main Loop}
\label{a:gphMain}
\begin{algorithmic}[1]
	\algrestore{abreak1}
		\State it = 2
		\While{it $<$ n}
			\State Compute $\partial f(x(it))$
      \If{$f$ is linear on $[x(it-1),x(it)]$}
            \State it2use = it - 1; t = 2;
     \Else \Comment{$f$ quadratic on $[x(it-1),x(it)]$}
            \State it2use = it; t = 1;
     \EndIf			
			\While{$(\xb, \yb - \epsilon)$ is below line $\cal L$}
				\State lb(k,:)=[x(ib), t, it2use, ib, \nan];
				\State k++; ib++; Update $(\xb, \yb - \epsilon)$				
			\EndWhile
			\State xbm = esub\_compute\_xb
			\State lb(1,:) = $[ \text{xbm}, t, it2use, ib, \nan]$;\Comment{Unless special case}\label{al:brpt1}
			\If{$f$ nonsmooth at $x(it)$}
				\State Call \_update\_nonsmooth\_xt
			\EndIf			
		\EndWhile
		\algstore{abreak2}
\end{algorithmic}
\end{algorithm}

Finally, the contribution from the last piece of $f$ needs to be accounted for, which is the purpose of Algorithm~\ref{a:gphLast}. Similar to the first piece, we consider the 3 possible cases: a right-bounded domain, a linear piece, or a quadratic last piece. We then update the matrix lb accordingly.

\begin{algorithm}
\caption{Computing $\gph \partial_\epsilon f$; Process last piece}
\label{a:gphLast}
\begin{algorithmic}[1]
		\algrestore{abreak2}
    \If{domain right bounded}
				\State lb(k,:) = $[\infty, 3, \nan, n, \nan]$
		\ElsIf{$f$ is linear on rightmost part}
        \State lb(k,:) = $[\infty, 2, it-1, ib, \nan]$
    \Else \Comment{quadratic on rightmost part}
        \State lb(k,:) = $[\infty, 1, it, ib, \nan]$
    \EndIf
	 \State \textbf{Return} lb
\EndFunction
\end{algorithmic}
\end{algorithm}

\begin{proposition}
Assume $f$ is a univariate lsc convex PLQ function and $\epsilon>0$. The algorithm displayed as Algorithms~\ref{a:gphInit}, \ref{a:gphMain}, \ref{a:gphLast} compute $\gph \partial_\epsilon f$ in linear time.
\end{proposition}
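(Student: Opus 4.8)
The plan is to prove correctness and linear running time separately, after first reducing the task to computing only the lower boundary $\ubar{g}=\inf\partial_\epsilon f$. By Lemma~\ref{l:ub} the upper boundary is recovered by running the same algorithm on $h(x)=f(-x)$ and negating, so it suffices to certify that Algorithms~\ref{a:gphInit}, \ref{a:gphMain}, and \ref{a:gphLast} produce a matrix $L$ that correctly encodes $\ubar{g}$, and to bound their work by $O(n)$. Throughout I would lean on the geometric characterization already set up: for $\xb\in\dom(f)$ the value $\ubar{g}(\xb)$ is the slope of the line $\cal L$ through $(\xb,\yb-\epsilon)$ that is tangent to $\gph f$ from below at a point $(\xt,\yt)$ with $\xt<\xb$ (Figures~\ref{f:t1}--\ref{f:t3}), a fact that issues from the conjugate duality of Fact~\ref{f:equiv} together with Lemma~\ref{l:cst}, which ties $\xt$ and $\xb$ through equation~\eqref{e:q}.

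For correctness I would argue that $\ubar{g}$ is piecewise with breakpoints arising from exactly two sources, and that the sweep records precisely those. A breakpoint of $\ubar{g}$ occurs either when $\xb$ crosses an original breakpoint $x_i$ of $f$ (so the index $\bar{i}$ of the piece containing $\xb$ changes) or when the tangent abscissa $\xt$ crosses an original breakpoint (so $\tilde{i}$ changes, or the contact switches between a smooth tangency, type $t=1$, and a corner contact, type $t=2$). Between consecutive breakpoints the triple $(\tilde{i},\bar{i},t)$ is constant and Lemma~\ref{l:cst} gives $\ubar{g}(\xb)$ in closed form as the appropriate root of~\eqref{e:q}; hence storing a row $[x,t,\tilde{i},\bar{i},v]$ suffices to reconstruct $\ubar{g}$. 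I would then verify, branch by branch, that the initialization (Algorithm~\ref{a:gphInit}) emits the correct first rows for a left-unbounded domain, a linear first piece, or a quadratic first piece; that the main loop (Algorithm~\ref{a:gphMain}) emits a type-$1$ row when the current piece is quadratic and a type-$2$ row when it is linear, inserting a constant (type $3$) row wherever the tangent slope becomes pinned; that the subroutine \_update\_nonsmooth\_xt accounts for the extra fan of slopes generated at a nonsmooth $\xt$; and that Algorithm~\ref{a:gphLast} closes off the unbounded, linear, or quadratic rightmost piece. Since each emitted row matches the value dictated by~\eqref{e:q}, the matrix $L$ evaluates to $\ubar{g}$ everywhere.

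The heart of the argument is monotonicity, and this is where I would spend the most care. Because $f$ is convex, the tangent abscissa $\xt$ is a nondecreasing function of $\xb$: increasing $\xb$ forces the lower tangent line through $(\xb,\yb-\epsilon)$ to touch $\gph f$ at a point that never moves left (equivalently, keeping the $\epsilon$ gap fixed pulls both contact points rightward together). Consequently the two indices the algorithm maintains, $\bar{i}$ (the variable ib) and $\tilde{i}$ (the variable it), are both monotone nondecreasing over the whole run. This is exactly the invariant that legitimizes the two-pointer structure: the inner while-loops only ever advance ib, the outer loop only ever advances it, and neither index is ever reset or decremented, so the single left-to-right sweep encounters every relevant configuration $(\tilde{i},\bar{i},t)$ in order.

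Linear complexity then follows by an amortized two-pointer count: ib and it each range over $1,\dots,n$ without backtracking, so the total number of inner iterations across the entire execution is $O(n)$; each iteration performs only constant-time work (a below-the-line test and one call to esub\_compute\_xb, i.e. solving the quadratic~\eqref{e:q}) and appends a bounded number of rows to $L$, so $L$ has $O(n)$ rows and the algorithm runs in $O(n)$ time, with the pass on $h(x)=f(-x)$ for the upper boundary adding only another $O(n)$. I expect the principal obstacle to be making the monotonicity claim rigorous across the PLQ setting, where pieces may be linear (so the contact degenerates to a corner) or $f$ may be nonsmooth at $\xt$, since monotonicity of both pointers is what simultaneously guarantees completeness of the single sweep and its linearity; a secondary, more tedious obstacle is confirming that the exhaustive case split (unbounded ends, linear versus quadratic pieces, smooth versus nonsmooth contacts, and the pinned type-$3$ regions) is complete and that every branch emits rows consistent with~\eqref{e:q}.
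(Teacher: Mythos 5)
Your proposal is correct and follows essentially the same route as the paper: the paper's (much terser) proof rests on exactly your two observations, namely that correctness follows from the definitions of $\partial_\epsilon f$, $\xt$, and $\xb$ together with the fact that breakpoints of $\ubar{g}$ arise only from breakpoints of $f$ or from intersections of $\cal L$ with $\gph(f-\epsilon)$, and that linearity follows because every loop increments one of the monotone indices it or ib. Your elaboration of the monotonicity of $\xt$ in $\xb$ and the amortized two-pointer count simply makes explicit what the paper leaves implicit.
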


\begin{proof}
All the loops in the algorithm increment either $\tilde{i}$ (variable it) or $\bar{i}$ (variable ib), hence the algorithm runs in linear time. The correctness of the algorithm follows from the definition of $\partial_\epsilon f$ and the definitions of $\xt$ and $\xb$.
\end{proof}

\section{Extension to non-PLQ functions}\label{s:extend}
All the computation in Section~\ref{s:algPt} and \ref{s:algGph} apply to convex piecewise lsc PLQ functions. However, the algorithms are easily extended to convex piecewise lsc functions for which the intersection of a line with a given piece can be computed in constant time. 

More precisely, the algorithm only requires detecting whether the piece of a convex piecewise lsc function $f$ is linear, computing $\partial f$ at any point, and running the subroutine esub\_compute\_xb. The later can be performed explicitly for convex piecewise cubic or quartic polynomials. Other functions require using a numerical method like Newton's method since we guarantee that the intersection exists on a given interval (note that there may be 2 intersection points so a little care must be exercised to select the right point).

\section{Conclusion and future work}\label{s:conc}
We proposed two algorithms. The first one is an improvement from the linear-time algorithm proposed in \cite{BAJAJ-16}. It is a dual algorithm that relies on the conjugate, the fact that a points on the graph of $f$ are in one-to-one correspondence with points on the graph of $f^*$, and a dichotomic search. The result is a logarithmic time algorithm to evaluate the $\epsilon$-subdifferential at a given point.

The second algorithm computes the full graph of $\partial_\epsilon f$. It is a line sweep algorithm that moves a point $(\xt, \yt)$ on the graph of $f$ while computing the associated point $(\xb, \yb-\epsilon)$ on the graph of $f-\epsilon$. The data structure we adopt allows the evaluation of $\partial_\epsilon f$ at a point $\xb$ in logarithmic time for a given point, or in linear-time for a given grid of points. Hence, after a linear pre-processing time, the algorithm is as efficient as the previous one for a small number of points, and more efficient (linear time vs. log-linear time) for a grid of points.

Finally, we indicated how the algorithms readily extend to convex lsc non-PLQ functions. Our data structure is particularly efficient in that regard since the algorithms are exactly the same; only the subroutine to compute the intersection of a line with a piece of $f$ has to be changed.

The algorithms for convex PLQ functions have been implemented in Scilab within the CCA numerical library~\cite{LUCET-96}. 

Future work involves implementing the algorithms for non-PLQ functions, and considering functions of 2 variables, which involves completely different data structures.

\section*{Acknowledgements}
This work was supported in part by Discovery Grants \#298145-2013 (Lucet) from NSERC, and The University of British Columbia, Okanagan campus. Part of the research was performed in the Computer-Aided Convex Analysis (CA2) laboratory funded by a Leaders Opportunity Fund (LOF, John R. Evans Leaders Fund -- Funding for research infrastructure) from the Canada Foundation for Innovation (CFI) and by a British Columbia Knowledge Development Fund (BCKDF).

This work was started at the end of Anuj Bajaj MSc research under the guidance of Dr. Warren Hare. Their preliminary efforts, ideas, and interest motivated the authors to pursue more efficient algorithms. The authors thank them for their initial contribution without which this work would not have been possible. 
	
\bibliographystyle{alpha}      %spbasic basic style, author-year citations
%\bibliography{plq_epssub}

\end{document}